\providecommand{\U}[1]{\protect\rule{.1in}{.1in}}
\newtheorem{theorem}{Theorem}
\newtheorem{corollary}[theorem]{Corollary}
\newtheorem{lemma}[theorem]{Lemma}
\newtheorem{proposition}[theorem]{Proposition}
\newenvironment{proof}[1][Proof]{\noindent\textbf{#1.} }{\ \rule{0.5em}{0.5em}}
\begin{document}
\title{Principal orbit type theorems for reductive algebraic group actions and the Kempf--Ness Theorem}
\author{Nolan R. Wallach}
\maketitle
\begin{abstract}The main result asserts: Let $G$ be a reductive, affine algebraic group and let $(\rho  ,V)$ be a regular representation of $G$. Let $X$ be an irreducible $\mathbb{C}^{ \times } G$ invariant Zariski closed subset such that $G$ has a closed orbit that has maximal dimension among all orbits (this is equivalent to: generic orbits are closed). Then there
exists an open subset, $W$,of $X$ in the metric topology which is dense with complement of measure $0$ such that if $x ,y \in W$ then $\left (\mathbb{C}^{ \times } G\right )_{x}$ is conjugate to $\left (\mathbb{C}^{ \times } G\right )_{y}$. Furthermore, if $G x$ is a closed orbit of maximal dimension and if $x$ is a smooth point of $X$ then there exists $y \in W$ such that $\left (\mathbb{C}^{ \times } G\right )_{x}$ contains a conjugate of $\left (\mathbb{C}^{ \times } G\right )_{y}$. The proof involves using the Kempf-Ness theorem to reduce the result to the principal orbit type theorem for compact Lie groups. 
\end{abstract}

\section{Introduction}
This paper is a spin-off from some joint work \cite{GoKrWa} and
\cite{SaWaGaKr} related to quantum information theory. In the first of these
papers we proved a result that could have been derived from the Luna {\'E}tal Slice Theorem \cite{Luna}.
But since physicists were the intended audience, we decided to give a proof of the needed geometric result that ``only''\ used
the principal orbit type theorem for compact transformation groups. Luna's theorem does not directly imply the result in either paper and in the second
paper the method in the first using invariant theory to reduce the needed result to one in which the slice theorem applies was unavailable. Thus the reduction
to compact groups was the simplest route in that case. 

In Richardson \cite{Richardson}
(Theorem 5.3), Luna \cite{Luna} (Theorem 8) there is a proof that if $X$ is a smooth affine $G$ space with $G$ reductive acting morphically then there is a Zariski open, non-empy subset $U \subset X$ such that if $x ,y \in U$ then $G_{x} =\{g \in G\vert g x =x\}$ and $G_{y}$ are conjugate. 

The main results in this paper prove similar theorems under stronger hypotheses on the group action.
The main result asserts: Let $G$ be a reductive, affine algebraic group and let $(\rho  ,V)$ be a regular representation of $G$. Let $X$ be an irreducible $\mathbb{C}^{ \times } G$ invariant Zariski closed subset such that $G$ has a closed orbit that has maximal dimension among all orbits (it his is equivalent to: generic orbits are closed). Then there
exists an open subset, $W$,of $X$ in the metric topology which is dense with complement of measure $0$ such that if $x ,y \in W$ then $\left (\mathbb{C}^{ \times } G\right )_{x}$ is conjugate to $\left (\mathbb{C}^{ \times } G\right )_{y}$. Furthermore, if $G x$ is a closed orbit of maximal dimension and if $x$ is a smooth point of $X$ then there exists $y \in W$ such that $\left (\mathbb{C}^{ \times } G\right )_{x}$ contains a conjugate of $\left (\mathbb{C}^{ \times } G\right )_{y}$. 

An example of the difference between the two theorems the latter implies directly that if $X$ is smooth (e.g. $X$ is affine space and $G$ acts linearly) and if there is one closed $G$--orbit, $G x$, such that $\left (\mathbb{C}^{ \times } G\right )_{x} =\{I\}$ then the generic isotropy group is trivial (this is the result proved in \cite{GoKrWa}
and \cite{SaWaGaKr}). The Luna {\'E}tal Slice theorem implies that the
generic isotropy group for $G$ is trivial but this does not directly imply the result for $\mathbb{C}^{ \times } G$. (See example 2 in the last section.) 

The Richardson-Luna theorem described above implies that the set
$W$ in our theorem contais a non--empty Zariski open subset. Notice, that in many examples only one orbit of $\mathbb{C}^{ \times } G$ is closed in $X$ so the Luna {\'E}tal Slice Theorem does not directly apply. 

We also note that the same arguments prove
the analogue of the above result for $G$ acting on an Zariski closed irreducible subst of $V$. This result is a direct consequence of the Luna {\'E}tal Slice Theorem and the Richardson-Luna principal orbit theorem applied
to the smooth points. We feel that there is still a benefit to having our argument in the literature since it is based on the the Kempf-Ness Theorem (an
amazing interaction between Freshman Calculus and the Hilbert-Mumford Theorem) and the Principal Orbit Type Theorem for compact Lie groups which are more
elementary than the Luna {\'E}tal Slice theorem. 

In the last section we give two examples. The first is an action on affine space
where there is an orbit with thrivial stabilizer but the generic orbit $G$ has stabilizer of order 8. This example also shows why one can't apply the theorem for compact Lie groups directly. Since in
this example the generic stabilizer for a maximal compact subgroup must be trivial. 

We thank Hanspeter Kraft for pointing out a blunder
in an early version of this note. We also thank our co-authors in \cite{SaWaGaKr}
for pushing for a path to the geometric results we needed that could be understood by non-experts in algebraic transformation groups. 

\section{The Kempf-Ness Theorem.}
Let $G$ be a Zariski closed, reductive algebraic subgroup of $G L (n ,\mathbb{C})\text{.}$ We can assume that $g^{ \ast } \in G$ for all $g \in G$ ($g^{ \ast }$ the conjugate transpose of $g$) see e.g. \cite{GIT}, Theorem 3.13. Let $K =G \cap U (n)$. Then $K$ is a maximal compact subgroup of $G$ and $G$ is the Zariski closure of $K$ in $G L (n ,\mathbb{C})$. Let $V$ be the $G$--module $\mathbb{C}^{n}$ with the $G$ action as a subgroup of $G L (n ,\mathbb{C})$. Let $ \langle \ldots  ,\ldots  \rangle $ equal to the standard Hermitian inner product
on $V$. We say that element $v \in V$ is critical if $f_{X} (v) = \langle X v ,v \rangle  =0 ,X \in L i e (G)\text{.}$ We use the notation $C r i t (V)$ for the set of critical elements. In the literature this set is usual called the Kempf-Ness set. Then $C r i t (V)$ is a real algebraic sub-variety of $V$. Since \begin{equation*}L i e (G) =L i e (K) \oplus i L i e (K)
\end{equation*} as a real subspace the $f_{X}$ with $X \in L i e (K)$ define $C r i t (V)\text{.}$ 

Recall the Kempf-Ness Theorem \cite{Kempf-Ness}
(cf.\cite{GIT} Theorem 3.26).

\begin{theorem}
\label{Kempf-Ness}Let $G ,K$ be as above. Let $v \in V$. 

1. $v$ is critical if and only if $\left \Vert g v\right \Vert  \geq \left \Vert v\right \Vert $ for all $g \in G$. 

2. If $v$ is critical and $X \in L i e (G)$ is such that $X^{ \ast } =X$ and if $\left \Vert e^{X} v\right \Vert  =\left \Vert v\right \Vert $ then $X v =0$. 

3. If $v ,w$ are critical and $w \in G v$ then $w \in K v$. 

4. If $G v$ is closed then $G v \cap C r i t (V) \neq \varnothing $. 

5. If $v$ is critical then $G v$ is closed. 
\end{theorem}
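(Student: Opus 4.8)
The whole theorem will follow from one ``Freshman Calculus'' observation together with the Cartan decomposition. For $v\in V$ and $X\in L i e (G)$ with $X^{\ast}=X$ put $\varphi_{X}(t)=\|e^{tX}v\|^{2}$. Differentiating and using $X^{\ast}=X$ one gets $\varphi_{X}'(0)=2\langle Xv,v\rangle=2f_{X}(v)$ and $\varphi_{X}''(t)=4\|Xe^{tX}v\|^{2}\ge 0$, so $\varphi_{X}$ is convex, and strictly convex unless $Xv=0$. I will use this together with the polar decomposition $G=K\exp(\mathfrak{p})$, where $\mathfrak{p}=\{X\in L i e (G):X^{\ast}=X\}=i\,L i e (K)$ (standard for a self-adjoint $G$), and the trivial remark that $f_{iX}=if_{X}$, so that $v$ is critical iff $f_{X}(v)=0$ for every Hermitian $X\in L i e (G)$.

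Given these, parts 1--3 are immediate. For 1: if $v$ is critical then each convex $\varphi_{X}$ ($X$ Hermitian) has $\varphi_{X}'(0)=0$, hence $\|e^{X}v\|^{2}=\varphi_{X}(1)\ge\varphi_{X}(0)=\|v\|^{2}$; writing $g=k\,e^{X}$ with $k\in K$, $X\in\mathfrak{p}$ and using that $k$ is unitary gives $\|gv\|\ge\|v\|$. Conversely, $\|gv\|\ge\|v\|$ for all $g$ makes $t=0$ a minimum of every convex $\varphi_{X}$, so $f_{X}(v)=\tfrac12\varphi_{X}'(0)=0$ for all Hermitian $X$, i.e.\ $v$ is critical. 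For 2: criticality and $\|e^{X}v\|=\|v\|$ force the convex function $\varphi_{X}$ on $[0,1]$ to have vanishing derivative at $0$ and equal endpoint values, hence to be constant, so $\varphi_{X}''\equiv 0$ and therefore $Xv=0$. For 3: if $v,w$ are critical and $w=gv$ then $\|w\|=\|gv\|\ge\|v\|$ and $\|v\|=\|g^{-1}w\|\ge\|w\|$ by part 1, so $\|v\|=\|w\|$; writing $g=k\,e^{X}$ ($k\in K$, $X\in\mathfrak{p}$) gives $\|e^{X}v\|=\|w\|=\|v\|$, so $Xv=0$ by part 2, whence $w=k\,e^{X}v=kv\in Kv$. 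Part 4 is pure compactness: a Zariski-closed orbit $Gv$ is closed in the metric topology, so the proper function $\|\cdot\|^{2}$ attains a minimum on it at some $w=gv$; then $\|hw\|\ge\|w\|$ for all $h\in G$, so $w$ is critical by part 1 and lies in $Gv\cap C r i t (V)$.

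Part 5 is the only real difficulty, and it is where the Hilbert--Mumford theorem enters. Assume $v$ is critical but $Gv$ is not closed; then $\overline{Gv}$ contains a closed orbit $Gw_{0}\ne Gv$, and by the Hilbert--Mumford criterion there is a one-parameter subgroup $\lambda$ of $G$ with $v_{0}:=\lim_{t\to 0}\lambda(t)v$ existing and lying in $Gw_{0}$, hence $v_{0}\ne v$. After conjugating $\lambda$ by a suitable element of $K$ (which preserves all norms) one may take its infinitesimal generator $H$ to be Hermitian. Then $V$ is the orthogonal sum of the eigenspaces $\{u:Hu=mu\}$, existence of the limit forces the negative-weight components of $v$ to vanish and $v_{0}$ to be the weight-zero component, and $v_{0}\ne v$ means some positive-weight component of $v$ is nonzero; hence $\|v_{0}\|^{2}<\|v\|^{2}$. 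But $\|v_{0}\|=\lim_{t\to 0^{+}}\|\lambda(t)v\|\ge\|v\|$ by part 1, a contradiction, so $Gv$ is closed.

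The step I expect to fight with is exactly the reduction, inside part 5, of the destabilizing one-parameter subgroup to one with Hermitian generator; this is the genuine point of contact with the Kempf--Ness picture (the optimal destabilizing direction lies in $i\,L i e (K)=\mathfrak{p}$) and needs either Kempf's theory of adapted one-parameter subgroups or a gradient-flow argument for the norm. It can be sidestepped by instead running the convexity estimates along a Cartan-decomposed sequence $g_{n}=k_{n}\exp(X_{n})$ with $g_{n}v$ tending to a boundary point of $\overline{Gv}$, extracting a limiting direction $Y=\lim X_{n}/\|X_{n}\|\in\mathfrak{p}$ with $Yv=0$; but closing that argument seems to require an induction on $\dim V$, so I would present the Hilbert--Mumford line as the main route and only remark on the alternative.
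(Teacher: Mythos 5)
The paper does not actually prove this theorem---it is recalled from \cite{Kempf-Ness} (cf.\ \cite{GIT}, Theorem 3.26)---so your proposal can only be measured against the standard argument. Parts 1--4 are correct and complete as you give them: the convexity computation $\varphi_X''(t)=4\Vert Xe^{tX}v\Vert^2$, the reduction of criticality to Hermitian directions via $f_{iX}=if_X$ and $Lie(G)=Lie(K)\oplus iLie(K)$, the polar decomposition $G=K\exp(\mathfrak{p})$, and the compactness argument for part 4 (a norm-minimizing sequence on the metrically closed orbit is bounded, hence subconvergent to a point of the orbit) are exactly the intended elementary content, and they are the same ingredients the paper itself reuses in the Corollary following the theorem.

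Part 5 has the gap you flag, and it is genuine: conjugating a one-parameter subgroup by $k\in K$ cannot make its generator Hermitian, since unitary conjugation carries the weight spaces $V_m$ to $kV_m$ and therefore preserves whether or not they are mutually orthogonal; while conjugating by a general $g\in G$ replaces $v$ by $gv$, which need not be critical, so part 1 no longer supplies the lower bound $\Vert\lambda(t)gv\Vert\ge\Vert gv\Vert$ that your contradiction requires. The standard repair is to use the Hilbert--Mumford theorem in the form that directly produces a destabilizing one-parameter subgroup with $\lambda(S^1)\subset K$, i.e.\ with Hermitian generator and mutually orthogonal weight spaces: this is what the Cartan-decomposition proof over $\mathbb{C}$ yields (write $g_n=k_n\exp(H_n)l_n$ with $k_n,l_n\in K$ and $H_n$ in a fixed abelian subspace $\mathfrak{a}\subset\mathfrak{p}$, extract a limiting direction of the $H_n$, and approximate it by a rational, hence integral, cocharacter of the corresponding torus), or equivalently what Kempf's optimal one-parameter subgroup theory gives via $G=KP(\lambda)$. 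Once that version is in hand, your computation $\Vert v_0\Vert^2=\Vert v\Vert^2-\sum_{m>0}\Vert v_m\Vert^2<\Vert v\Vert^2$ against $\Vert v_0\Vert\ge\Vert v\Vert$ closes the argument. As written, part 5 is a correct outline resting on an unproved (and, in the literal ``conjugate by $K$'' form, false) reduction rather than a proof.
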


\begin{corollary}
If $v \in C r i t (V)$ then $G_{v} =\{g \in G\vert g v =v\}$ is invariant under adjoint relative to $ \langle \ldots  ,\ldots  \rangle $. Thus $K_{v} =K \cap G_{v}$ is maximal compact in $G_{v}$ and $G_{v}$ is the Zariski closure of $K_{v}$ in $G$. 
\end{corollary}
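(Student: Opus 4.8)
The plan is to reduce the whole statement to the single assertion that $G_{v}$ is $*$-stable, i.e.\ that $g\in G_{v}$ implies $g^{*}\in G_{v}$. Once this is established, the claims that $K_{v}=K\cap G_{v}$ is a maximal compact subgroup of $G_{v}$ and that $G_{v}$ is the Zariski closure of $K_{v}$ follow immediately by applying to the Zariski-closed $*$-stable subgroup $G_{v}\subseteq GL(n,\mathbb{C})$ exactly the same structure theorem (\cite{GIT}, Theorem 3.13) that was invoked for $G$ itself at the start of this section; note that $G_{v}$ is automatically Zariski closed, being the isotropy group of a point of a morphic action. So the only real content is the $*$-stability of $G_{v}$.

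To prove $*$-stability I would use the Cartan (polar) decomposition of $G$ adapted to $K$. Write $\mathfrak{p}=\{X\in\mathrm{Lie}(G):X^{*}=X\}$, and recall that every $g\in G$ factors as $g=k\,e^{X}$ with $k\in K$ and $X\in\mathfrak{p}$; concretely, $g^{*}g$ is a positive-definite Hermitian element of $G$, and $X=\tfrac12\log(g^{*}g)\in\mathfrak{p}$, $e^{X}=(g^{*}g)^{1/2}$, $k=g e^{-X}\in K$. Now take $g\in G_{v}$ and write $g=k\,e^{X}$ as above. From $k\,e^{X}v=v$ and the unitarity of $k$ we get $\|e^{X}v\|=\|k^{-1}v\|=\|v\|$. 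Since $v$ is critical, $X\in\mathrm{Lie}(G)$ and $X^{*}=X$, Theorem~\ref{Kempf-Ness}(2) forces $Xv=0$; hence $e^{X}v=v$, and therefore $k^{-1}v=e^{X}g^{-1}v=e^{X}v\cdot$ — more directly, $kv=k\,e^{X}v=gv=v$. Consequently $g^{*}=(k\,e^{X})^{*}=e^{X}k^{-1}$, so $g^{*}v=e^{X}k^{-1}v=e^{X}v=v$, i.e.\ $g^{*}\in G_{v}$. This is the whole point: the factor $k$ is norm-preserving, so the norm constraint $\|gv\|=\|v\|$ is absorbed entirely by the Hermitian factor $e^{X}$, which Theorem~\ref{Kempf-Ness}(2) then kills.

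Having shown $G_{v}$ is $*$-stable, I conclude as in the first paragraph: $K_{v}=K\cap G_{v}=G_{v}\cap U(n)$ is maximal compact in $G_{v}$, and $G_{v}$ equals the Zariski closure of $K_{v}$ in $GL(n,\mathbb{C})$, hence in $G$. The one place that deserves a careful word is the polar decomposition step — namely that $(g^{*}g)^{1/2}$ again lies in $G$ and that its logarithm lies in $\mathrm{Lie}(G)$ with $X^{*}=X$ — but this is precisely the standard Cartan decomposition for the $*$-stable reductive group $G$, so I do not anticipate any genuine obstacle beyond this routine bookkeeping.
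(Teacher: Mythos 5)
Your proof is correct and follows essentially the same route as the paper: Cartan decomposition $g = k e^{X}$ with $k\in K$, $X^{*}=X$, the norm identity $\|v\|=\|ke^{X}v\|=\|e^{X}v\|$, Theorem~\ref{Kempf-Ness}(2) to get $Xv=0$ and hence $kv=v$, and then $g^{*}=e^{X}k^{-1}\in G_{v}$. Your additional remarks spelling out why $*$-stability of the Zariski-closed group $G_{v}$ yields the maximal-compact and Zariski-closure conclusions are exactly the intended (and in the paper, implicit) final step.
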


\begin{proof}
Let $g \in G_{v}$. $g =k e^{X}$ (cf. \cite{GIT} Theorem 2.16) with $X \in L i e (G)$ such that $X^{ \ast } =X$ and $k \in K$ then \begin{equation*}\left \Vert v =\right \Vert \left \Vert k e^{X} v\right \Vert  =\left \Vert e^{X} v\right \Vert \text{.}
\end{equation*} Thus $X v =0.$ Hence $k v =v$ so $g^{ \ast } =e^{X} k^{ -1} \in K v$. 
\end{proof}

\section{Some consequences}
Let $G$ be a reductive, affine algebraic group and let $X$ be an irreducible affine variety on which $G$ acts. It is standard that there exists a regular representation $(\sigma  ,V)$ of $G$, Zariski closed $\sigma  (G)$--invariant subset, $Y$, of $V$ and an equivariant isomorphism $\Psi  :X \rightarrow Y$. For simplicity we replace $G$ with $\sigma  (G)$ and $X$ with $Y$. We can so assume that there is an inner product $ \langle \ldots  ,\ldots  \rangle $ on $V$ such that $G$ is invariant under adjoint with respect to $ \langle \ldots  ,\ldots  \rangle $. (cf. \cite{GIT}
Proposition 3.3 and Theorem 3.13). Thus $G$, $V$ is exactly as in the previous section. Let $K$ and $C r i t (V)$ have the meanings above. 

We set $C r i t (X) =X \cap C r i t (V)$. If $x \in X$ then $G x$ is closed $X$ if and only if it is closed in $V$. Thus $X^{c}$, the union of the closed orbits in $X$, is equal to $X \cap V^{c}$. Also $X^{c} =G C r i t (X)$. Let \begin{equation*}d (X) =\max_{x \in X^{c}}G x ,d_{1} (X) =\max_{x \in X}G x\text{.}
\end{equation*} Set \begin{equation*}C r i t^{o} (X) =\{x \in C r i t (X)\vert \dim K x =d (X)\}\text{.}
\end{equation*} We also set $X_{r e g}$ \ equal to the set of smooth points of $X$ and $X_{d (X)}^{c}$ equal to the union of the closed $G$--orbits in $X$ of dimension equal to $d (X)$. 

In addition we set \begin{equation*}X_{ \leq r} =\{x \in X\vert \dim G x \leq r\}
\end{equation*} and\ \begin{equation*}X_{ \geq r} =\{x \in X\vert \dim G x \geq r\}
\end{equation*} then $X_{ \leq r}$ is Zariski closed in $X$, $X_{ \geq r}$ is Zariski open in X and both are $G$--invariant.

\begin{lemma}
Assume that $d_{1} (X) =d (X)$ then $X_{d (X)}^{c}$ is Zariski open in $X$. 
\end{lemma}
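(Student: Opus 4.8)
The goal is to show that under the hypothesis $d_1(X) = d(X)$ — i.e., the closed orbits already achieve the maximal orbit dimension — the locus $X^c_{d(X)}$ of closed orbits of maximal dimension is Zariski open in $X$. The natural strategy is to realize $X^c_{d(X)}$ as the intersection of two sets we already understand: the locus where the orbit dimension is maximal, and the locus where the orbit is closed. The first piece is easy: by the remarks just before the lemma, $X_{\geq d(X)}$ is Zariski open and $G$-invariant, and since $d(X) = d_1(X)$ is the maximum orbit dimension over all of $X$, we have $X_{\geq d(X)} = \{x \in X \mid \dim Gx = d(X)\}$, the locus of maximal-dimensional orbits. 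So I would first record that $X^c_{d(X)} = X^c \cap X_{\geq d(X)}$, reducing the problem to showing that $X^c \cap X_{\geq d(X)}$ is Zariski open.

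**The main step:**

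The heart of the matter is showing that within the open set $U = X_{\geq d(X)}$, the set of points lying on a closed orbit is Zariski open. The key observation is that on $U$ every orbit has the same dimension $d(X)$, so orbits cannot degenerate: an orbit $Gx$ with $x \in U$ fails to be closed only if its closure $\overline{Gx}$ contains an orbit of strictly smaller dimension, which therefore lies outside $U$. I would argue that the map $U \to U/\!/G$ (or more concretely, using that $X^c = X \cap V^c = G\cdot\mathrm{Crit}(X)$) behaves well here: a standard fact is that the union of non-closed orbits is a $G$-invariant set, and the obstruction to closedness is controlled by the boundary. Concretely, I expect to use that $\overline{Gx} \setminus Gx$ is a union of orbits of dimension $< \dim Gx$; on $U$ this boundary is disjoint from $U$, so $Gx$ is closed \emph{in $U$}, but one needs closedness in $X$. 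Here I would invoke that for a point $x$ with $\overline{Gx}$ meeting the complement $X \setminus U$, closedness fails, and conversely; then use the Kempf--Ness description $X^c = G\,\mathrm{Crit}(X)$ together with properness/upper-semicontinuity of orbit dimension to conclude that $\{x \in U : Gx \text{ not closed}\}$ is the image under the (proper on suitable pieces) action map of a closed set, hence Zariski closed in $U$. An alternative and perhaps cleaner route: since every orbit in $U$ has maximal dimension $d(X)$, the quotient morphism $\pi\colon X \to X/\!/G$ restricted to $U$ has the property that fibers over points in $\pi(X^c \cap U)$ are single orbits; one shows $\pi(X \setminus U)$ is Zariski closed in $X/\!/G$ and that $X^c \cap U = U \cap \pi^{-1}(\pi(X^c))$ is open because its complement in $U$ maps into the closed set $\pi(X\setminus U)$.

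**Where I expect the difficulty:**

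The main obstacle is the passage from ``closed in $U$'' to ``closed in $X$,'' or equivalently controlling how non-closed orbits in $U$ are detected algebraically. Upper semicontinuity of $\dim Gx$ gives Zariski openness of $U$ for free, but closedness of orbits is not an open condition in general — it becomes one precisely because we have thrown away everything of smaller orbit dimension. Making this rigorous requires either a careful argument with the GIT quotient $\pi$ (showing $\pi(X \setminus U)$ is closed and that orbits over its complement, intersected with $U$, are automatically closed), or a dimension-count with the incidence variety $\{(x,y) : y \in \overline{Gx}\}$. I would aim for the quotient-morphism argument as the cleanest, citing the standard facts that $\pi$ is surjective, that each fiber contains a unique closed orbit, and that $\dim Gx = d(X)$ for all $x \in U$ forces the fiber of $\pi|_U$ through $x$ to consist of orbits that are all open in that fiber, hence a closed orbit in that fiber must be \emph{all} of $Gx$ when $x \in U$; one then checks the bad locus maps into $\pi(X \setminus U)$, which is constructible and whose closure meets $U$ trivially by the dimension hypothesis.
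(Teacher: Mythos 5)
Your overall strategy is the right one and is essentially the paper's: separate the maximal-dimension closed orbits from the small-orbit locus $X_{\leq d-1}$ using invariants, and use the fact that boundary orbits of $\overline{Gy}$ have strictly smaller dimension to upgrade ``all orbits in $\overline{Gy}$ have dimension $d$'' to ``$Gy$ is closed.'' The paper does exactly this, but concretely: for $x\in X_{d}^{c}$ the sets $Gx$ and $X_{\leq d-1}$ are disjoint closed $G$-invariant sets, so there is $u_{x}\in\mathcal{O}(X)^{G}$ with $u_{x}(Gx)=1$ and $u_{x}(X_{\leq d-1})=0$; the principal open set $X_{u_{x}}$ misses $X_{\leq d-1}$, so every orbit in it has dimension $d$, and since $u_{x}$ is constant on orbit closures, every orbit in $\overline{Gy}$ for $y\in X_{u_{x}}$ again lies in $X_{u_{x}}$ and so has dimension $d$, forcing $\overline{Gy}=Gy$. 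Then $X_{d}^{c}=\bigcup_{x}X_{u_{x}}$ is open.

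However, as written your argument has a genuine gap at exactly the point you flag. The identity $X^{c}\cap U=U\cap\pi^{-1}(\pi(X^{c}))$ is false: every fiber of $\pi$ contains a closed orbit, so $\pi(X^{c})$ is all of $X/\!/G$ and the right-hand side is all of $U$. The identity you actually need is $X_{d}^{c}=U\setminus\pi^{-1}\left(\pi(X_{\leq d-1})\right)$, and this requires \emph{both} inclusions: (i) if $x\in U$ and $Gx$ is not closed, then the unique closed orbit in $\overline{Gx}$ has dimension $<d$, hence $\pi(x)\in\pi(X_{\leq d-1})$ --- this is the direction you sketch; but also (ii) if $x\in X_{d}^{c}$ then $\pi(x)\notin\pi(X_{\leq d-1})$, i.e.\ no point of $X_{\leq d-1}$ lies in the fiber of $x$. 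Point (ii) holds because the closed orbit in a fiber lies in the closure of every orbit in that fiber, and the closure of an orbit of dimension $\leq d-1$ cannot contain an orbit of dimension $d$; equivalently it is the separation property of the GIT quotient applied to the disjoint closed invariant sets $Gx$ and $X_{\leq d-1}$. You never verify (ii), and your proposal wavers among several unfinished routes (properness, incidence varieties) rather than committing to and closing one. Once you replace the false identity with the correct one and prove both inclusions, your quotient-map formulation becomes a correct proof equivalent to the paper's.
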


\begin{proof}
We may assume that $G$ is connected since $\dim G x =\dim  G^{o} x$ if $G^{o}$ is the identity component of $G$. Set $d =d (X)$. Let $x \in X_{d}^{c}$. Then $G x\;$is closed and $G x \cap X_{ \leq d -1} =\varnothing $. Thus there exists $u_{x} \in \mathcal{O} (X)^{G}$ such that $u_{x} (G x) =\{1\} ,u_{x} (X_{ \leq d -1}) =\{0\}$(cf. \cite{GIT} 3.13). We assert that the principal
open set \begin{equation*}X_{u_{x}} =\{y \in X\vert u_{x} (y) \neq 0\}
\end{equation*} is contained in $X_{d}^{c}$. Indeed, $X_{u_{x}} \cap X_{ \leq d -1} =\varnothing $. Hence, if $y \in X_{u_{x}}$ then $\dim G y =d$. Since $u_{x}$ is constant on the closure of $G y$, every $G$--orbit in the closure of $G y$ has dimension $d$. This implies that $G y$ is closed. Clearly \begin{equation*} \cup _{x \in X_{d}^{c}}X_{u_{x}} =X_{d}^{c}\text{.}
\end{equation*} 
\end{proof}

\begin{corollary}
\label{reg}If $d (x) =d_{1} (X)$ then $X_{d (X)}^{c} \cap X_{r e g}$ is Zariski open and non-empty. 
\end{corollary}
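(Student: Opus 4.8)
The plan is to read this corollary off from the preceding Lemma together with one standard fact. First I would note that, because $X$ is irreducible, its smooth locus $X_{reg}$ is a non-empty — in fact dense — Zariski-open subset of $X$: the singular locus of a variety over $\mathbb{C}$ is a proper Zariski-closed subset. Next, the hypothesis $d(X)=d_{1}(X)$ is exactly the hypothesis of the Lemma, so $X_{d(X)}^{c}$ is Zariski open in $X$. It then remains only to check that $X_{d(X)}^{c}$ is non-empty, and that the intersection of the two open sets is non-empty.

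For the non-emptiness of $X_{d(X)}^{c}$: by the definition $d(X)=\max_{x\in X^{c}}\dim Gx$ there is some $x_{0}\in X^{c}$ with $\dim Gx_{0}=d(X)$; since $X^{c}$ is the union of the closed orbits in $X$, the orbit $Gx_{0}$ is closed of dimension $d(X)$, hence $x_{0}\in X_{d(X)}^{c}$. (Alternatively one may produce such a point inside $Crit^{o}(X)$, whose non-emptiness follows from part 4 of the Kempf--Ness Theorem applied to a closed orbit of maximal dimension.) Finally, since $X$ is irreducible, any two non-empty Zariski-open subsets of $X$ meet — indeed their intersection is again dense and open — so $X_{d(X)}^{c}\cap X_{reg}$ is Zariski open and non-empty.

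The argument presents no real obstacle; the corollary is a formal consequence of the Lemma and the irreducibility of $X$. The only point worth stating carefully is that the non-emptiness of $X_{d(X)}^{c}$ comes directly from the \emph{definition} of $d(X)$ — a closed orbit realizing that dimension exists by construction — so that there is no circularity in invoking the Lemma. If desired, the same reasoning gives the slightly stronger assertion that $X_{d(X)}^{c}\cap X_{reg}$ is dense in $X$.
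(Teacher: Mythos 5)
Your proof is correct and is exactly the argument the paper intends: the paper states this corollary without proof, treating it as the immediate consequence of the preceding Lemma (openness of $X_{d(X)}^{c}$), the non-emptiness of both open sets, and the fact that non-empty Zariski-open subsets of an irreducible variety intersect. Nothing further is needed.
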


\begin{proposition}
\label{smooth}If $d (X) =d_{1} (X)$ then $C r i t^{o} (V) \cap X_{r e g}$ is a real submanifold of $X_{r e g}$ of dimension $2\dim X -d (X)\text{.}$ 
\end{proposition}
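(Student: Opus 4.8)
The plan is to identify $\mathrm{Crit}^{o}(X)\cap X_{reg}$ with the zero fibre of a moment map on a K\"ahler manifold and then study it locally by the slice theorem for the compact group $K$. First I would set $U:=X_{d(X)}^{c}\cap X_{reg}$; since $d(X)=d_{1}(X)$, the Lemma makes $U$ Zariski open in $X_{reg}$ (so a complex, hence K\"ahler, submanifold of $V$), and it is nonempty by Corollary \ref{reg}. It is $K$-- (and $G$--) invariant, and with $\mu\colon U\to\mathfrak k^{*}$, $\langle\mu(v),Y\rangle=\tfrac1i\langle Yv,v\rangle$ ($\mathfrak k=\mathrm{Lie}(K)$), the associated moment map, one has $\mu^{-1}(0)=\mathrm{Crit}(V)\cap U$. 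I would then check $\mathrm{Crit}^{o}(X)\cap X_{reg}=\mu^{-1}(0)$: a point $v$ of the left side is critical, so by the Corollary to Theorem \ref{Kempf-Ness} $G_{v}$ is reductive with maximal compact $K_{v}$, whence (both $K\subset G$ and $K_{v}\subset G_{v}$ being maximal compact in reductive groups) $\dim_{\mathbb R}Kv=\dim_{\mathbb R}K-\dim_{\mathbb R}K_{v}=\dim_{\mathbb C}G-\dim_{\mathbb C}G_{v}=\dim_{\mathbb C}Gv$; this equals $d(X)$, so $Gv$ is a closed orbit of maximal dimension and $v\in U$ --- and the reverse inclusion is the same computation run backwards.

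Next, fix $x\in\mu^{-1}(0)$ and build a local model. Write $T_{x}X=\mathfrak g\cdot x\oplus N$ with $N=(\mathfrak g\cdot x)^{\perp}$ the Hermitian orthocomplement, a $K_{x}$--stable complex subspace with $\dim_{\mathbb C}N=\dim X-d(X)$. Using that $x$ is critical one gets $\mathfrak g\cdot x=\mathfrak k\cdot x\oplus i\,\mathfrak k\cdot x$ with the two summands real--orthogonal, and hence $(\mathfrak k\cdot x)^{\omega}=\mathfrak k\cdot x\oplus N$; so the \emph{symplectic} slice at $x$ is $W:=(\mathfrak k\cdot x)^{\omega}/\mathfrak k\cdot x\cong N$. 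By the symplectic slice theorem (Marle--Guillemin--Sternberg normal form) for the compact group $K$, a $K$--invariant neighbourhood of $Kx$ in $U$ is equivariantly symplectomorphic to $K\times_{K_{x}}(\mathfrak m^{*}\times W)$, where $\mathfrak k=\mathfrak k_{x}\oplus\mathfrak m$ is a $K_{x}$--splitting; under this identification $\mu^{-1}(0)$ becomes $K\times_{K_{x}}\bigl(\{0\}\times\mu_{W}^{-1}(0)\bigr)$, with $\mu_{W}\colon W\to\mathfrak k_{x}^{*}$ the moment map of the linear $K_{x}$--action on $W$.

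The crucial point, and where the hypothesis $d(X)=d_{1}(X)$ is used, is that $K_{x}^{\circ}$ acts trivially on $W\cong N$. Near $x$ every orbit has dimension $\le d_{1}(X)=d(X)=\dim Gx$ and, by upper semicontinuity of isotropy dimension, $\ge\dim Gx$; so all orbits near $x$ have dimension exactly $d(X)$, and since $y\mapsto\mathfrak g\cdot y\subset T_{y}X$ is an involutive distribution of constant rank there, these orbits are the leaves of a foliation $\mathcal F$. For $Z\in\mathfrak k_{x}$ the fundamental vector field $\xi_{Z}$ vanishes at $x$ and is everywhere tangent to $\mathcal F$, so its flow fixes $x$ and maps each leaf to itself; hence the linearisation $L_{Z}$ of $\xi_{Z}$ at $x$ induces the zero map on $T_{x}X/\mathfrak g\cdot x$, i.e. $L_{Z}(T_{x}X)\subset\mathfrak g\cdot x$. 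Since $N$ is $K_{x}$--stable, $L_{Z}(N)\subset N$, so $L_{Z}(N)\subset N\cap(\mathfrak g\cdot x)=0$. Thus $\mathfrak k_{x}$, hence $K_{x}^{\circ}$, acts trivially on $W$, so $\mu_{W}\equiv0$ and $\mu_{W}^{-1}(0)=W$. Therefore $\mu^{-1}(0)$ is, near $x$, diffeomorphic to $K\times_{K_{x}}W$: a manifold of real dimension $\dim_{\mathbb R}K-\dim_{\mathbb R}K_{x}+\dim_{\mathbb R}W=d(X)+2(\dim X-d(X))=2\dim X-d(X)$, which is the assertion.

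The hard part will be Step 2: correctly installing the symplectic slice and matching $\mu^{-1}(0)$ with the linear model $\mu_{W}^{-1}(0)$. One must work with the \emph{symplectic} slice $W\cong N$, not the full slice $T_{x}X/\mathfrak k\cdot x$, on which $K_{x}^{\circ}$ in general does \emph{not} act trivially; and a naive constant--rank argument directly on $U$ is not available, because $\operatorname{rank}d\mu_{y}=\dim_{\mathbb R}Ky$ need not be locally constant near $x$ --- it jumps upward along sequences $y\to x$ whose (closed) orbits have isotropy groups that are not invariant under the adjoint of $K$. Modulo the Marle--Guillemin--Sternberg normal form (which is elementary for compact groups), everything else is the bookkeeping above together with Step 3.
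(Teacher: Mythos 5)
Your argument is correct, but it reaches the conclusion by a genuinely different and heavier route than the paper. The paper's proof is a bare--hands implicit function theorem argument: near $v\in \mathrm{Crit}^{o}(X)\cap X_{reg}$ it chooses $X_{1},\ldots ,X_{d}\in \mathrm{Lie}(K)$ with $X_{1}v,\ldots ,X_{d}v$ linearly independent, observes that on the Zariski open set where this independence persists the critical set is cut out by the $d$ functions $g_{j}(u)=\frac{1}{2i}\langle X_{j}u,u\rangle$ alone (the remaining components of the moment map being combinations of these along the critical set), and checks that the functionals $(dg_{j})_{v}(w)=\omega (X_{j}v,w)$ are independent on $T_{v}(X)$ because $\omega =\mathrm{Im}\langle \cdot ,\cdot \rangle$ is nondegenerate on the complex subspace $T_{v}(X)$; the count $2\dim X-d$ then falls out of the implicit function theorem. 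You instead install the Marle--Guillemin--Sternberg normal form and then show the symplectic slice representation of $K_{x}^{\circ}$ on $W\cong N$ is trivial, using constancy of orbit dimension on $U=X_{d(X)}^{c}\cap X_{reg}$ (this is where $d=d_{1}$ enters; in fact every point of $U$ has $G$--orbit dimension exactly $d$ by construction, so the semicontinuity remark is not even needed). Your identification $\mathrm{Crit}^{o}(X)\cap X_{reg}=\mu ^{-1}(0)$, the orthogonality bookkeeping at a critical point, and the dimension count via $K\times _{K_{x}}W$ are all sound, and your local model actually delivers more than the proposition asks for --- it shows that nearby points of $\mathrm{Crit}^{o}(X)\cap X_{reg}$ have $S^{1}K$--isotropy contained in a conjugate of the isotropy at $x$, anticipating part of the principal orbit type theorem proved later. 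The trade--off is that MGS is a substantially less elementary input than the paper means to use (its stated aim is to get by with Kempf--Ness plus the compact principal orbit type theorem), and the foliation argument you need to kill the action of $\mathfrak{k}_{x}$ on $T_{x}X/\mathfrak{g}\cdot x$ is exactly the kind of care the direct submersion argument avoids. Both proofs are valid; the paper's is shorter and self--contained.
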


\begin{proof}
We note that, since $X$ is irreducible and both $X_{r e g}$ and $X_{d}^{c}$ are Zariski open and non--empty, hence $X_{r e g} \cap X_{d}^{c} \neq \varnothing $. 

Let $m =\dim K$. Let $v \in C r i t^{o} (X)$ then $\dim  K_{v} =m -d\text{.}$ Let $X_{1} ,\ldots  . , X_{m}$ be a basis of $L i e (K)$ such that $X_{d +1} ,\ldots  ,X_{m}$ is a basis of $L i e (K_{v})\text{.}$ Set \begin{equation*}U =\{u \in X\vert X_{1} u \wedge \cdots  \wedge X_{d} u \neq 0\}\text{.}
\end{equation*} Then $U$ is Zariski open in $X$ and $v \in U$. Set \begin{equation*}g_{j} (x) =\frac{1}{2 i} \langle X_{j} x ,x \rangle \text{.}
\end{equation*} Then $g_{j}$ is real valued. We assert that \begin{equation*}C r i t (V) \cap U =\{u \in U\vert g_{j} (u) =0 ,1 \leq j \leq d\}\text{.}
\end{equation*} Let $u \in C r i t (V) \cap U$. By the definition of $d$, if $m \geq j >d$ then \begin{equation*}X_{j} u =\sum _{k \leq d}a_{k ,j} (u) X_{k} u\text{.}
\end{equation*} So \begin{equation*}g_{j} (u) =\sum _{k \leq d}a_{k ,j} g_{k} (u) =0.
\end{equation*} Let \begin{equation*}\omega  (x ,y) =\mathrm{I} \mathrm{m} \langle x ,y \rangle 
\end{equation*} for $x ,y \in V$. Then $\omega $ is a nondegenerate, alternating $\mathbb{R}$--bilinear form on $V$ which is non-degenerate on any complex subspace of $V$ in particular on $T_{x} (X)$ for $x$ a smooth point of $X$. Furthermore, if $w \in V$ then\begin{equation*}\left (d g_{j}\right )_{v} (w) =\omega  (X_{j} v ,w)\text{,}
\end{equation*} Thus\begin{equation*}d g_{1 v\vert T_{v} (X)} ,\ldots  ,d g_{d v\vert T_{v} (X)}
\end{equation*} are linearly independent. This implies that $C r i t (X) \cap U \cap X_{r e g}$ is a (real) submanifold of $U$ of dimension $2\dim X -d$. \ Clearly, $C r i t (X) \cap U \cap X_{r e g} \subset C r i t^{o} (V) \cap X_{r e g}$ and $C r i t^{o} (V) \cap X_{r e g}$ is the union of these submanifolds. 
\end{proof}

\begin{theorem}
\label{main-point}Assume that $d =d_{1}$, $C r i t^{o} (X) \cap X_{r e g}$ is non-empty and connected. 
\end{theorem}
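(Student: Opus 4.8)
The goal is to show that $\operatorname{Crit}^o(X)\cap X_{reg}$ is connected (it is already known to be a nonempty real submanifold of dimension $2\dim X - d$ by Proposition \ref{smooth}). The plan is to exploit the Kempf--Ness retraction: by Theorem \ref{Kempf-Ness}, parts 4 and 5, the closed orbits in $X$ are exactly those meeting $\operatorname{Crit}(X)$, and for a critical $v$ the orbit $Gv$ is closed with $Gv\cap\operatorname{Crit}(X)=Kv$ a single $K$-orbit (part 3). Thus there is a natural bijection between $K$-orbits in $\operatorname{Crit}^o(X)$ and closed $G$-orbits of maximal dimension in $X$, i.e. between $\operatorname{Crit}^o(X)/K$ and $X^c_{d(X)}/G$. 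Since $X$ is irreducible and $X^c_{d(X)}$ is Zariski open and nonempty in $X$ (the Lemma), it is Zariski connected, hence connected in the metric topology, hence irreducible as a variety; the quotient $X^c_{d(X)}/G$ (a geometric quotient on this locus, since all orbits are closed of the same dimension) is then connected.

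The key step is to transfer this connectedness of the orbit space back to $\operatorname{Crit}^o(X)\cap X_{reg}$. First I would restrict attention to $X^c_{d(X)}\cap X_{reg}$, which by Corollary \ref{reg} is Zariski open, nonempty, hence connected; write $X^o=X^c_{d(X)}\cap X_{reg}$. Every point of $X^o$ lies on a closed orbit of dimension $d$, and by Theorem \ref{Kempf-Ness}(4) that orbit meets $\operatorname{Crit}^o(X)\cap X_{reg}$; conversely $\operatorname{Crit}^o(X)\cap X_{reg}\subset X^o$ because a critical point of a closed maximal orbit in the smooth locus lies in $X^o$, and $GX^o = X^o$ is $G$-stable and contains $\operatorname{Crit}^o(X)\cap X_{reg}$. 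So the inclusion $\operatorname{Crit}^o(X)\cap X_{reg}\hookrightarrow X^o$ induces a continuous surjection $G\times_K(\operatorname{Crit}^o(X)\cap X_{reg})\to X^o$, indeed a homeomorphism: it is the Kempf--Ness result that $G/G_v\to Gv$ factors as $G\times_K Kv$, and gluing over $X^o$ gives that $X^o$ is $G$-equivariantly homeomorphic to the twisted product $G\times_K(\operatorname{Crit}^o(X)\cap X_{reg})$. Since $G$ is connected (we may assume so, as in the Lemma's proof, because passing to $G^o$ changes neither orbit dimensions nor, after a finite cover, connectedness counts — or more carefully, one runs the whole argument for $G^o$ and notes $\operatorname{Crit}^o$ only shrinks), the projection $G\times_K M\to X^o$ has connected fibers $\cong G/K$ when $M$ is one $K$-orbit; from $X^o$ connected and the fibers of $M\to X^o/K$ connected, $M=\operatorname{Crit}^o(X)\cap X_{reg}$ is connected.

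More concretely, I would argue as follows. Let $p:\operatorname{Crit}^o(X)\cap X_{reg}\to X^o/G$ be the composition with the quotient map; its image is all of $X^o/G$ by Theorem \ref{Kempf-Ness}(4), and its fibers are single $K$-orbits by Theorem \ref{Kempf-Ness}(3), which are connected because $K$ is connected (here we really do need $K$ connected; if $G$ is only reductive one works with $K^o$ and $G^o$ throughout, which suffices since isotropy conjugacy is a statement up to the finite group $\pi_0$). A continuous open surjection with connected fibers from a space onto a connected space, where additionally the base is locally connected, has connected total space; alternatively, since $X^o$ (hence $X^o/G$) is connected and each fiber $Kx$ is connected, and the map $p$ is open (it is the restriction of the orbit map, open on the submanifold by the slice-type structure coming from Kempf--Ness), any separation of $\operatorname{Crit}^o(X)\cap X_{reg}$ would descend to a separation of $X^o/G$. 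Hence $\operatorname{Crit}^o(X)\cap X_{reg}$ is connected.

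The main obstacle I anticipate is making the topological transfer rigorous: specifically, establishing that the map $p$ (or the orbit map $G\times_K(\operatorname{Crit}^o(X)\cap X_{reg})\to X^o$) is open, or at least that it behaves well enough with respect to connected components. This is where the Kempf--Ness theorem does the real work — it guarantees the retraction $X^o\to(\operatorname{Crit}^o(X)\cap X_{reg})/K$ is continuous (the nearest-point / minimal-norm map along $G$-orbits) — but one must check that this retraction, restricted to the smooth dimension-$d$ locus, lands in the smooth locus and is genuinely continuous there, not merely set-theoretically defined. A secondary technical point is the reduction to the connected group: one should confirm that replacing $G$ by $G^o$ and $K$ by $K^o$ does not create or destroy components of $\operatorname{Crit}^o(X)\cap X_{reg}$ in a way that breaks the argument, which follows because $\pi_0(G)$ is finite and acts on everything compatibly, so connectedness for $G^o$ combined with the fact that the statement of the ambient theorem is insensitive to finite covers gives what is needed.
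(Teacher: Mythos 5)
Your overall strategy coincides with the paper's: both arguments descend a hypothetical separation of $\mathrm{Crit}^{o}(X)\cap X_{reg}$ to a separation of the connected set $X_{d}^{c}\cap X_{reg}$ (connected because it is Zariski open and non-empty in the irreducible variety $X$), and both use part 3 of the Kempf--Ness theorem to see that each closed orbit of maximal dimension meets $\mathrm{Crit}^{o}(X)$ in a single $K$-orbit, which is connected once one reduces to the identity component. So the skeleton is the right one.

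The genuine gap is the step you yourself flag as ``the main obstacle'': the openness of the map $\Psi\colon G\times\left(\mathrm{Crit}^{o}(X)\cap X_{reg}\right)\rightarrow X_{d}^{c}\cap X_{reg}$, equivalently of your map $p$. Without openness the separation does not descend: $G U_{i}$ need not be open, and a continuous surjection with connected fibers onto a connected base can have disconnected total space if it is not open. You attribute this openness to a ``slice-type structure'' and to continuity of a nearest-point retraction of $X$ onto the Kempf--Ness set; but Theorem \ref{Kempf-Ness} asserts no such retraction (continuity of the gradient-flow retraction onto the Kempf--Ness set is a substantially harder theorem that the paper never invokes), and no proof of openness is actually given in your proposal. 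The paper's argument at this point is a short, concrete computation: by the proof of Proposition \ref{smooth}, $T_{x}\left(\mathrm{Crit}^{o}(X)\cap X_{reg}\right)$ is the annihilator of $Lie(K)x$ with respect to $\omega(x,y)=\mathrm{Im}\langle x,y\rangle$, and since $\omega(Xx,iYx)=-\mathrm{Re}\langle Xx,Yx\rangle$ with $\mathrm{Re}\langle\cdot,\cdot\rangle$ positive definite, $iLie(K)x$ meets that tangent space only in $0$; hence $d\Psi_{g,x}$ has image of real dimension $d+(2\dim X-d)=2\dim X$, so $\Psi$ is a submersion and therefore open in the metric topology. Supplying this (or an equivalent) computation is exactly what is missing; with it, the rest of your argument goes through essentially as the paper's does.
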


\begin{proof}
We can assume that $G$ is connected since $X_{d}^{c}$ and $C r i t^{o} (X)$ are the same for $G$ and its identity component. We consider the map \begin{equation*}\Psi  :G \times C r i t^{o} (X) \cap X_{r e g} \rightarrow X_{r e g} ,g ,x \longmapsto g x\text{.}
\end{equation*} We assert the the differential of $\Psi $ is surjective. Indeed, the tangent space at $x \in C r i t^{o} (X) \cap X_{r e g}$ (thought of as a subspace of $V$) is \begin{equation*}T_{x} (C r i t^{o} (X) \cap X_{r e g}) =\{w \in V\vert \omega  (L i e (K) x ,w) =0\}
\end{equation*} (see the proof of Proposition \ref{smooth}). If $X ,Y \in L i e (K)$, $x \in C r i t^{o} (X) \cap X_{r e g}$ and if $\omega $ is as above then \begin{equation*}\omega  (X x ,i Y x) =\mathrm{I} \mathrm{m} \langle X x ,i Y x \rangle  = -\mathrm{R} \mathrm{e} \langle X x ,Y x \rangle \text{.}
\end{equation*} We also note that $\mathrm{R} \mathrm{e} \langle \ldots  ,\ldots  \rangle $ defines a positive definite symmetric form on $V$ as a vector space over $\mathbb{R}$. Thus \begin{equation*}i L i e (K) x \cap T_{x} (C r i t^{o} (X) \cap X_{r e g}) =\{0\}\text{.}
\end{equation*} Now if $g \in G$ then the image of the tangent space to $G \times C r i t^{o} (X) \cap X_{r e g}$ at $g ,x$ under $d \Psi _{g ,x}$ is \begin{equation*}g \left (i L i e (K) x \oplus T_{x}(C r i t^{o} (X) \cap X_{r e g}\right )
\end{equation*} which is of real dimension $2\dim X$. \ This implies that $\Psi $ is an open mapping in the metric topology. Corollary \ref{reg} implies that $V_{d}^{c} \cap X_{r e g}$ is irreducible as a quasi-affine variety over $\mathbb{C}$ hence it is connected in the metric topology (cf. \cite{shafarevich}
Book3 Section 7.3 Theorem 7.2). We note that $C r i t^{o} (X) \cap X_{r e g} =C r i t (X) \cap X_{d}^{c} \cap X_{r e g}$. Every closed orbit intersects $C r i t (X)$ thus\begin{equation*}G C r i t^{o} (X) \cap X_{r e g} =X_{d}^{c} \cap X_{r e g}\text{.}
\end{equation*} Let $C r i t^{o} (X) \cap X_{r e g} =U_{1} \cup U_{2} \cup \ldots  \cup U_{r}$ be the decomposition into connected components. Each $U_{i}$ is $K$--invariant because $K$ is connected. Assume that $v \in G U_{i} \cap G U_{j}$. Then $v =g_{1} u_{1} =g_{2} u_{2}$ with $g_{1} ,g_{2} \in G$ and $u_{1} \in U_{i}$ and $u_{2} \in U_{j}$. Thus $g_{1}^{ -1} g_{2} u_{2} =u_{1}$. Hence Theorem \ref{Kempf-Ness} implies that there exists $k \in K$ such that $k u_{2} =u_{1}$. Thus implies $U_{i} =U_{j}$. On the other hand we have seen that the map \begin{equation*}\Psi  :G \times C r i t^{o} (X) \cap X_{r e g} \rightarrow X_{d}^{c} \cap X_{r e g} ,g ,x \longmapsto g x
\end{equation*} is an open mapping. This implies that $G U_{i}$ is open in the metric topology. Hence $r =1$. 
\end{proof}

\section{Semi-stability groups.}
In this section we consider the following situation. $V$ a finite dimensional vector space over $\mathbb{C}$ and $G \subset G L (V)$ a Zariski closed, reductive subgroup. We assume that $V^{c}$, the set of $x \in V$ such that $G x$ is closed, is not equal to $\{0\}$. If $H$ is an algebraic group then $H^{o}$ will denote its identity component. \ We may (as above) assume that there is a Hermitian
inner product, $ \langle \ldots  ,\ldots  \rangle $, on $V$ such that $G$ is invariant under adjoint. Set $U$ equal to the unitary group of $(V , \langle \ldots  ,\ldots  \rangle )$ and $K =G \cap U\text{.}$For the sake of simplicity we choose an orthonormal basis of $V$ and identify $V$ with $\mathbb{C}^{n}$ and $ \langle \ldots  ,\ldots  \rangle $ with the usual inner product on $\mathbb{C}^{n}$.

\begin{lemma}
\label{power}Let $x \in V^{c} -\{0\}$. If $z \in \mathbb{C}^{ \times } ,g \in G$ and $z g x =x$ then there exists $r =r_{x}$ depending only on $x$ such that $z^{r} =1$. Set $\tilde{G}_{x} =\{g \in G\vert g x =\xi  x$ for some $\xi  \in \mathbb{C}^{ \times }\}$ then $(\tilde{G}_{x})^{o} =(G_{x})^{o}\text{.}$ 
\end{lemma}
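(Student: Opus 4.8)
The plan is to package the scaling ambiguity into a single algebraic character and then exploit the fact that a \emph{Zariski} closed orbit through a nonzero vector cannot contain the whole punctured line $\mathbb{C}^{\times}x$.

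First I would introduce $\tilde{G}_{x}=\{g\in G\mid gx\in\mathbb{C}^{\times}x\}$ and note that it is a Zariski closed subgroup of $G$: it is cut out by the linear conditions $gx\wedge x=0$, and $gx\neq 0$ automatically because $g$ is invertible and $x\neq 0$. Since $x\neq 0$, the rule $gx=\chi(g)x$ defines a map $\chi:\tilde{G}_{x}\to\mathbb{C}^{\times}$, and writing $\chi(g)=\langle gx,x\rangle/\langle x,x\rangle$ shows that $\chi$ is a regular homomorphism with $\ker\chi=G_{x}$. The relation $zgx=x$ (with $z\in\mathbb{C}^{\times}$, $g\in G$) is equivalent to $g\in\tilde{G}_{x}$ and $\chi(g)=z^{-1}$, so the first assertion will follow once I show that $\operatorname{im}\chi$ is a finite subgroup of $\mathbb{C}^{\times}$: a finite subgroup of $\mathbb{C}^{\times}$ equals $\mu_{r}$ for a unique $r$, and then $z^{-1}\in\mu_{r}$ gives $z^{r}=1$ with $r=r_{x}:=|\operatorname{im}\chi|$.

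The image of a homomorphism of algebraic groups is a closed subgroup, so $\operatorname{im}\chi$ is one of $\{1\}$, $\mu_{r}$, or $\mathbb{C}^{\times}$; it therefore suffices to rule out $\operatorname{im}\chi=\mathbb{C}^{\times}$. If that held, then for each $z\in\mathbb{C}^{\times}$ there would be $g\in G$ with $gx=zx$, i.e. $\mathbb{C}^{\times}x\subseteq Gx$. But $x\in V^{c}$, so $Gx$ is Zariski closed in $V$; hence the Zariski closure $\overline{\mathbb{C}^{\times}x}=\mathbb{C}x$ is contained in $Gx$, and in particular $0\in Gx$. Since $G\subseteq GL(V)$ acts by invertible linear maps, $0\in Gx$ forces $x=0$, contradicting $x\neq 0$. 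Thus $\operatorname{im}\chi=\mu_{r_{x}}$ as desired.

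For the last assertion, $\chi\big((\tilde{G}_{x})^{o}\big)$ is a connected subgroup of the finite group $\mu_{r_{x}}$, hence trivial, so $(\tilde{G}_{x})^{o}\subseteq\ker\chi=G_{x}$ and therefore $(\tilde{G}_{x})^{o}\subseteq(G_{x})^{o}$; the opposite inclusion is immediate from $G_{x}\subseteq\tilde{G}_{x}$. I do not expect a real obstacle in this lemma; the only point that needs care is the step $\mathbb{C}^{\times}x\subseteq Gx\Rightarrow 0\in Gx$, which genuinely uses that ``closed orbit'' means Zariski closed, so that the Zariski closure of the punctured line (which contains $0$) remains inside $Gx$. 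The remaining ingredients — closedness of the image of a morphism of algebraic groups and the classification of closed subgroups of $\mathbb{C}^{\times}$ — are standard.
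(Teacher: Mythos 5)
Your proof is correct, and the skeleton (the character $\chi:\tilde{G}_{x}\to\mathbb{C}^{\times}$, its regularity via a linear functional not vanishing at $x$, and the connectedness argument giving $(\tilde{G}_{x})^{o}\subseteq\ker\chi$) coincides with the paper's. Where you diverge is in how the finiteness of $\operatorname{im}\chi$ is established. The paper invokes invariant theory directly: since $Gx$ is closed and $0\notin Gx$, there is a homogeneous $f\in\mathcal{O}(V)^{G}$ of some degree $r>0$ with $f(x)=1$, and then $1=f(gx)=f(z^{-1}x)=z^{-r}$ hands you $z^{r}=1$ with an explicit exponent (the degree of $f$). You instead use general structure theory --- the image of a morphism of algebraic groups is a closed subgroup, closed subgroups of $\mathbb{C}^{\times}$ are $\{1\}$, $\mu_{r}$, or $\mathbb{C}^{\times}$ --- and rule out the full torus by observing that $\mathbb{C}^{\times}x\subseteq Gx$ would force $0\in\overline{\mathbb{C}^{\times}x}\subseteq Gx$, which is exactly the right use of Zariski closedness of the orbit. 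Both arguments ultimately rest on the same geometric fact ($Gx$ closed and not containing $0$); the paper's version is more elementary (no need for the closed-image theorem) and produces a concrete $r$, while yours is arguably more transparent about \emph{why} the scalar must be a root of unity. One cosmetic caution: $\langle gx,x\rangle/\langle x,x\rangle$ equals $\chi(g)$ and is a regular function of $g$ only if the Hermitian form is linear in its first slot; the paper's choice of a $\mathbb{C}$-linear functional $\lambda\in V^{\ast}$ with $\lambda(x)=1$ sidesteps the conjugation issue entirely.
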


\begin{proof}
Assume that $x \in V^{c} -\{0\} ,z \in \mathbb{C}^{ \times } ,g \in G$ and $z g x =x$. There exists $f \in \mathcal{O} (V)^{G}$ such that $f$ is homogeneous of degree $r >0$ and $f (x) =1$. Thus \begin{equation*}1 =f (g x) =f (z^{ -1} g) =z^{ -r}\,
\end{equation*} Thus we have a morphism \begin{equation*}\chi  :\tilde{G}_{x} \rightarrow \mu _{r}
\end{equation*} ($\mu _{r}$ is the group of $r$--th roots of 1) defined by \begin{equation*}g x =\chi  (g) x\text{.}
\end{equation*} To see that $\chi $ is regular choose $\lambda  \in V^{ \ast }$ such that $\lambda  (x) =1$. If $g \in \tilde{G}_{x}$ then $\chi  (g) =\lambda  (g x)$. The lemma follows since $\chi  \left ((\tilde{G}_{x})^{o}\right ) =\{1\}$. 
\end{proof}

\begin{lemma}
If $x \in C r i t (V) -\{0\}$ then $\tilde{G}_{x}$ is invariant under $g \longmapsto g^{ \ast }$ relative to $ \langle \ldots  ,\ldots  \rangle $. In particular, $\tilde{G}_{x}$ is the Zariski closure of $K \cap \tilde{G}_{x}$. 
\end{lemma}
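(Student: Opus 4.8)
The plan is to copy, essentially verbatim, the argument for the Corollary to Theorem \ref{Kempf-Ness}, inserting Lemma \ref{power} at the one spot where the orbit map fails to be $1$-homogeneous. First I would observe that, since $x$ is critical, part 5 of Theorem \ref{Kempf-Ness} gives that $Gx$ is closed, so $x\in V^{c}-\{0\}$ and Lemma \ref{power} is available.

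Next, given $g\in\tilde G_{x}$, write $gx=\xi x$ with $\xi\in\mathbb{C}^{\times}$. By Lemma \ref{power}, $\xi^{r_{x}}=1$, so $|\xi|=1$ and hence $\|gx\|=\|x\|$. Using the Cartan decomposition $g=ke^{X}$ with $k\in K$ and $X\in Lie(G)$, $X^{\ast}=X$ (cf. \cite{GIT} Theorem 2.16), this yields $\|e^{X}x\|=\|ke^{X}x\|=\|gx\|=\|x\|$, so part 2 of Theorem \ref{Kempf-Ness} forces $Xx=0$. Therefore $e^{X}x=x$, whence $kx=\xi x$, and then $g^{\ast}=e^{X}k^{-1}$ satisfies $g^{\ast}x=e^{X}(k^{-1}x)=e^{X}(\xi^{-1}x)=\xi^{-1}x$, so $g^{\ast}\in\tilde G_{x}$. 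This proves the first assertion.

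For the ``in particular'' clause I would note that $\tilde G_{x}=\{g\in G\mid gx\in\mathbb{C}x\}$ is Zariski closed in $G$, since the linear dependence of $gx$ with the fixed nonzero vector $x$ is cut out by polynomial (minor) conditions; and we have just shown $\tilde G_{x}$ is stable under $g\mapsto g^{\ast}$. Hence the passage from $K$ to $G$ recalled at the start of Section 2 (cf. \cite{GIT} Theorem 3.13) applies with $\tilde G_{x}$ in place of $G$: $\tilde G_{x}\cap U=\tilde G_{x}\cap K$ is maximal compact in $\tilde G_{x}$ and $\tilde G_{x}$ is the Zariski closure of $\tilde G_{x}\cap K$.

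The only real obstacle is the step $\|gx\|=\|x\|$. For the ordinary stabilizer $G_{x}$ this is automatic, but for $\tilde G_{x}$ it requires the finiteness $\xi^{r_{x}}=1$ from Lemma \ref{power}, which is exactly where the hypothesis $x\neq 0$ (producing a positive-degree $G$-invariant not vanishing at $x$) enters; everything else is the same bookkeeping with the Cartan decomposition as in the Corollary.
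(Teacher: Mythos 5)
Your proposal is correct and follows essentially the same route as the paper: Cartan decomposition $g=ke^{X}$, the observation that $\xi^{r_x}=1$ forces $|\xi|=1$ so $\Vert e^{X}x\Vert=\Vert x\Vert$, and part 2 of the Kempf--Ness theorem to conclude $Xx=0$ and hence $g^{\ast}\in\tilde G_{x}$. Your treatment of the ``in particular'' clause (Zariski closedness of $\tilde G_{x}$ plus self-adjointness) is actually spelled out more carefully than in the paper, which leaves that step implicit.
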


\begin{proof}
The Kempf-Ness theorem implies that we may assume that $x \in C r i t (V)$. Let $g \in \tilde{G}_{x}$ and $g =k e^{X}$ with $k \in G \cap U (n)$ and $X \in L i e (G)$, $X^{ \ast } =X$. then $g x =\xi  x$ and since $\xi ^{r} =1$, $\left \vert \xi \right \vert  =1$. Thus\begin{equation*}\left \Vert x\right \Vert  =\left \Vert g x\right \Vert  =\left \Vert k e^{X} x\right \Vert  =\left \Vert e^{X} x\right \Vert \text{.}
\end{equation*} Hence, the Kempf-Ness theorem implies that $X x =0.$ Thus $X \in L i e (\tilde{G}_{x})$ hence $k \in \tilde{G}_{x}$. 
\end{proof}

Obviously

\begin{lemma}
Let $x \in V^{c}$ be and let $r =r_{x}$ be as in Lemma \ref{power}. If $\xi  \in \mu _{r}$ set \begin{equation*}\tilde{G}_{x ,\xi } =\{g \in G\vert g x =\xi  x\}\text{.}
\end{equation*} then \begin{equation*}\left (\mathbb{C}^{ \times } G\right )_{x} = \cup _{\xi  \in \mu _{r}}\xi ^{ -1} \tilde{G}_{x ,\xi }\text{.}
\end{equation*} 
\end{lemma}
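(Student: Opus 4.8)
The plan is to unwind the definition of the stabilizer $(\mathbb{C}^{\times}G)_{x}$ and then quote Lemma~\ref{power}. As a subgroup of $GL(V)$ the group $\mathbb{C}^{\times}G$ is, because the scalars are central, exactly the set of operators $zg$ with $z\in\mathbb{C}^{\times}$ and $g\in G$; such an operator fixes $x$ if and only if $gx=z^{-1}x$. So the first step is the observation that every $A\in(\mathbb{C}^{\times}G)_{x}$ can be written $A=zg$ with $zgx=x$, i.e.\ $gx=z^{-1}x$.

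Then I would verify the two inclusions. For $\supseteq$: if $\xi\in\mu_{r}$ and $g\in\tilde{G}_{x,\xi}$ then $\xi^{-1}g\in\mathbb{C}^{\times}G$ and $(\xi^{-1}g)x=\xi^{-1}(\xi x)=x$, so $\xi^{-1}g\in(\mathbb{C}^{\times}G)_{x}$. For $\subseteq$: write $A\in(\mathbb{C}^{\times}G)_{x}$ as $A=zg$ with $gx=z^{-1}x$ as above; since $x\in V^{c}$ (with $x\neq0$, so that $r_{x}$ is defined) Lemma~\ref{power} applies and gives $z^{r}=1$, hence $\xi:=z^{-1}\in\mu_{r}$ and $gx=\xi x$, i.e.\ $g\in\tilde{G}_{x,\xi}$; therefore $A=zg=\xi^{-1}g\in\xi^{-1}\tilde{G}_{x,\xi}$. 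The two inclusions together give the asserted equality.

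This is essentially bookkeeping and there is no genuine obstacle; the only points worth flagging are (i) the factorization $A=zg$ need not be unique when $\mathbb{C}^{\times}\cap G\neq\{1\}$, which is harmless because each factorization produces an admissible $\xi$ and the set on the right does not see the ambiguity, and (ii) although the statement is given for $x\in V^{c}$, one really needs $x\neq0$ for $r_{x}$ and Lemma~\ref{power} to be available — indeed for $x=0$ the left side is all of $\mathbb{C}^{\times}G$ while the right side is only $\mu_{r}^{-1}G$ — so it would be cleaner to assume $x\in V^{c}-\{0\}$ outright.
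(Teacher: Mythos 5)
Your proof is correct and is exactly the routine verification the paper has in mind — indeed the paper prefaces this lemma with ``Obviously'' and supplies no proof at all, so your unwinding of the definition plus the appeal to Lemma \ref{power} is precisely the intended argument. Your remark that the statement should really assume $x\neq 0$ (so that $r_{x}$ is defined) is a fair, minor correction to the paper's phrasing.
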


\begin{lemma}
$S^{1} K$ is a maximal compact subgroup of $\mathbb{C}^{ \times } G$. If $x \in C r i t (V)$ (for $G$) then the Zariski closure of $\left (S^{1} K\right )_{x}$ in $\mathbb{C}^{ \times } G$ is $\left (\mathbb{C}^{ \times } G\right )_{x}\text{.}$ 
\end{lemma}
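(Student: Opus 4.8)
The plan is to prove both assertions by the same device used in the corollary to Theorem \ref{Kempf-Ness}: a Zariski closed subgroup $H\subseteq GL(n,\mathbb{C})$ that is stable under $g\mapsto g^{\ast}$ is reductive, $H\cap U$ is a maximal compact subgroup of $H$, and $H$ is the Zariski closure of $H\cap U$ (cf.\ \cite{GIT}, Theorems 2.16 and 3.13). So I would reduce each assertion to a $\ast$-stability statement plus the computation of an intersection with $U$. For the first assertion: $\mathbb{C}^{\times}G$ is the image of the homomorphism of algebraic groups $\mathbb{C}^{\times}\times G\to GL(n,\mathbb{C})$, $(z,g)\mapsto zg$, hence Zariski closed in $GL(n,\mathbb{C})$, and it is $\ast$-stable since $(zg)^{\ast}=\bar z\,g^{\ast}$ and $G^{\ast}=G$. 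By the device above it then suffices to show $\mathbb{C}^{\times}G\cap U=S^{1}K$; the inclusion $\supseteq$ is clear, and for $\subseteq$ one takes $zg\in U$, writes the Cartan decomposition $g=ke^{X}$ with $k\in K$, $X\in Lie(G)$, $X^{\ast}=X$ (\cite{GIT}, Theorem 2.16), and observes that $I=(zg)(zg)^{\ast}=|z|^{2}ke^{2X}k^{-1}$ forces $e^{2X}=|z|^{-2}I$, so $e^{X}=|z|^{-1}I$ and $zg=(z/|z|)k\in S^{1}K$.

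For the second assertion, the case $x=0$ is exactly the first assertion, since $\left(\mathbb{C}^{\times}G\right)_{0}=\mathbb{C}^{\times}G$ and $\left(S^{1}K\right)_{0}=S^{1}K$, so I would assume $x\in Crit(V)-\{0\}$. The stabilizer $\left(\mathbb{C}^{\times}G\right)_{x}$ is automatically Zariski closed, and $\left(\mathbb{C}^{\times}G\right)_{x}\cap U=\left(S^{1}K\right)_{x}$ by the first assertion; so, by the device above, it suffices to prove that $\left(\mathbb{C}^{\times}G\right)_{x}$ is $\ast$-stable, and this is the step I expect to carry the weight. I would do it as follows: let $h=zg$ fix $x$, with $z\in\mathbb{C}^{\times}$, $g\in G$. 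Then $gx=z^{-1}x$, so $g\in\tilde{G}_{x}$, and Lemma \ref{power} gives $z^{r_{x}}=1$, hence $|z|=1$. Writing $g=ke^{X}$ with $k\in K$, $X^{\ast}=X$ as in the proof that $\tilde{G}_{x}$ is $\ast$-stable, that same argument yields $Xx=0$; hence $kx=gx=z^{-1}x$, hence $k^{-1}x=zx$ (as $k$ is unitary), hence $g^{\ast}x=e^{X}k^{-1}x=zx$, and finally $h^{\ast}x=\bar z\,g^{\ast}x=|z|^{2}x=x$.

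An alternative, perhaps closer to the setup of the preceding lemmas, avoids re-deriving $\ast$-stability of the stabilizer: use the decompositions $\left(\mathbb{C}^{\times}G\right)_{x}=\bigcup_{\xi\in\mu_{r_{x}}}\xi^{-1}\tilde{G}_{x,\xi}$ (the preceding lemma) and, by the same reasoning together with Lemma \ref{power}, $\left(S^{1}K\right)_{x}=\bigcup_{\xi\in\mu_{r_{x}}}\xi^{-1}\left(K\cap\tilde{G}_{x,\xi}\right)$; since Zariski closure commutes with finite unions and with left translation by the elements $\xi^{-1}$, one is reduced to $\overline{K\cap\tilde{G}_{x,\xi}}^{\,Zar}=\tilde{G}_{x,\xi}$, which follows by picking $k_{0}\in K\cap\tilde{G}_{x,\xi}$ (nonempty whenever $\tilde{G}_{x,\xi}$ is, again via the Cartan decomposition), writing $\tilde{G}_{x,\xi}=k_{0}G_{x}$ and $K\cap\tilde{G}_{x,\xi}=k_{0}K_{x}$, and invoking $\overline{K_{x}}^{\,Zar}=G_{x}$ from the corollary to Theorem \ref{Kempf-Ness}. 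Beyond the bookkeeping with the finite group $\mu_{r_{x}}$ and the modulus of $z$ (both controlled by Lemma \ref{power}), I do not foresee a real obstacle.
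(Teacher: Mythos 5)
Your proposal is correct and follows essentially the same route as the paper: show that $\mathbb{C}^{\times}G$ and $\left(\mathbb{C}^{\times}G\right)_{x}$ are Zariski closed and $\ast$-stable, identify their intersections with $U$ as $S^{1}K$ and $\left(S^{1}K\right)_{x}$ via the Cartan decomposition, Lemma \ref{power} (giving $|z|=1$), and part 2 of Theorem \ref{Kempf-Ness}, and then invoke the standard maximal-compact/Zariski-closure device. The only (harmless) difference is that in computing $\mathbb{C}^{\times}G\cap U$ you absorb $|z|^{-1}$ into the scalar factor directly, whereas the paper rules out $|z|\neq 1$ by appealing to $V^{c}\neq\{0\}$.
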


\begin{proof}
Since $\mathbb{C}^{ \times } G$ is invariant under adjoint $\left (\mathbb{C}^{ \times } G\right ) \cap U$ is a maximal compact subgroup. Let $z \in \mathbb{C} ,k \in K$ and $X \in L i e (G)$ with $X^{ \ast } =X$. Then \begin{equation*}z k e^{X} \in U
\end{equation*} if and only if $e^{2 X} =\left \vert z\right \vert ^{ -2} I$. If $\left \vert z\right \vert  \neq 1$ then $\mathbb{C}^{ \times } I \subset G$. But we have assumed that $V^{c} \neq \{0\}$. Hence $\left \vert z\right \vert  =1$. Note that $\left (\mathbb{C}^{ \times } G\right )_{x}$ is invariant under adjoint. Indeed, if $x \neq 0$ then if $z g \in \left (\mathbb{C}^{ \times } G\right )_{x}$ with $z \in \mathbb{C}^{ \times }$ and $g \in G$ then $z^{r_{x}} =1$. Let $g =k e^{X}$ with $k \in K ,X \in L i e (G)$ and $X^{ \ast } =X$. Then\begin{equation*}\left \Vert x\right \Vert  =\left \Vert z g x\right \Vert  =\left \Vert e^{X} x\right \Vert 
\end{equation*} thus $X x =0$ hence $X \in L i e (G_{x})$. This implies that $k x =z^{ -1} x$. Now $(z g)^{ \ast } =e^{X} z^{ -1} k^{ -1}$ which fixes $x$. Thus $\left (S^{1} K\right )_{x}$ is maximal compact in $\left (\mathbb{C}^{ \times } G\right )_{x}$. 
\end{proof}

\section{A principal orbit type theorem}
We maintain the notation of the previous section. Let $X \subset V$ be an irreducible Zariski closed and $\mathbb{C}^{ \times } G$ invariant.

\begin{theorem}
Let $d (X) ,d_{1} (X)$ and $X_{d (X)}^{c}$ be as in section 2 for $G$ and assume that $d (V) =d_{1} (V)$. Then there exists an open subset, $U$, of $X_{d (X)}^{c} \cap X_{r e g}$ with complement in $X$ a union of real submanifolds of strictly lower real dimension than $2\dim X$ such that if $x ,y \in U$ then $\left (\mathbb{C}^{ \times } G\right )_{x}$ and $\left (\mathbb{C}^{ \times } G\right )_{y}$ are $G$ conjugate and if $x \in X_{d (X)}^{c} \cap X_{r e g}$ then $\left (\mathbb{C}^{ \times } G\right )_{x}$ contains a $G$--conjugate of $\left (\mathbb{C}^{ \times } G\right )_{u}$ with $u \in U$. 
\end{theorem}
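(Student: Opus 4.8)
The plan is to reduce the statement to the principal orbit type theorem for the compact group $S^1K$ acting on a suitable compact submanifold, using the Kempf--Ness apparatus assembled in the previous sections. First I would pass to the identity component of $G$, as is permissible throughout since neither $X_{d(X)}^c$ nor $\mathrm{Crit}^o(X)$ changes, and invoke Proposition~\ref{smooth} to realize $M := \mathrm{Crit}^o(X)\cap X_{\mathrm{reg}}$ as a real submanifold of dimension $2\dim X - d(X)$, which Theorem~\ref{main-point} tells us is non-empty and connected. The key point is that $S^1K$ acts on $M$: indeed $K$ preserves $\mathrm{Crit}(X)$ (critical vectors go to critical vectors under unitaries since $\|kx\|=\|x\|$), preserves $X_{\mathrm{reg}}$ and the dimension stratification, and the scalars $S^1$ act on $X$ by hypothesis and clearly fix the critical condition and smoothness; moreover $S^1K$ is compact and $M$ is a connected manifold on which it acts smoothly.

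Next I would apply the principal orbit type theorem for compact transformation groups (for instance in the form found in Bredon) to the $S^1K$-action on $M$: there is an open dense subset $U_0\subseteq M$ whose complement is a countable union of submanifolds of strictly smaller dimension, such that all points of $U_0$ have $S^1K$-stabilizers conjugate in $S^1K$, and every point of $M$ has stabilizer containing (a conjugate of) the principal one. The translation to $\mathbb{C}^\times G$-stabilizers is then supplied by the last lemma of Section~4: for $x\in\mathrm{Crit}(V)$ the Zariski closure of $(S^1K)_x$ in $\mathbb{C}^\times G$ is exactly $(\mathbb{C}^\times G)_x$. Hence if $(S^1K)_x$ and $(S^1K)_y$ are conjugate by an element of $S^1K\subseteq \mathbb{C}^\times G$, taking Zariski closures shows $(\mathbb{C}^\times G)_x$ and $(\mathbb{C}^\times G)_y$ are conjugate by that same element, which lies in $\mathbb{C}^\times G$; one checks the conjugating element can be taken in $G$ since the $S^1$ factor is central and acts trivially by conjugation, giving the asserted $G$-conjugacy. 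The containment statement for an arbitrary $x\in X_{d(X)}^c\cap X_{\mathrm{reg}}$ follows the same way: such $x$ lies on a closed orbit of dimension $d(X)$, which meets $\mathrm{Crit}(X)$ by Theorem~\ref{Kempf-Ness}(4), so replacing $x$ by a critical point in its $G$-orbit (which only conjugates the stabilizer, by Theorem~\ref{Kempf-Ness}(3)) puts us in $M$, and the compact-group theorem gives $(S^1K)_x\supseteq$ a conjugate of $(S^1K)_u$ for $u\in U_0$; taking Zariski closures upgrades this to $(\mathbb{C}^\times G)_x$ containing a $G$-conjugate of $(\mathbb{C}^\times G)_u$.

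Finally I would set $U := GU_0\cap X_{d(X)}^c\cap X_{\mathrm{reg}}$, or rather observe directly that $U_0$ already does the job once we note $GU_0$ is open in the metric topology (by the open-mapping argument of Theorem~\ref{main-point}, since $d\Psi$ is surjective) and that conjugacy of $\mathbb{C}^\times G$-stabilizers is a $G$-invariant condition; the complement of $U$ in $X_{d(X)}^c\cap X_{\mathrm{reg}}$ is the $G$-saturation of the bad set in $M$, which by the open-mapping/submersion property of $\Psi$ and the fact that the fibers of $\Psi$ over $X_{d(X)}^c$ are finite-to-one modulo the fixed-dimensional $iL(K)x$ direction is again a countable union of submanifolds of real dimension $<2\dim X$, hence of measure zero in $X$. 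The main obstacle I anticipate is precisely this last bookkeeping: verifying that saturating the (small-dimensional) exceptional set of the compact theorem under the $G$-action does not inflate its dimension up to $2\dim X$ — this needs the dimension count $\dim G + \dim M - (\text{generic fiber dimension}) = \dim_{\mathbb R}X_{d(X)}^c = 2\dim X$ from Proposition~\ref{smooth} together with properness of the relevant restriction of $\Psi$, rather than any deep new idea. A secondary, purely technical, point is confirming that the exceptional set is genuinely a countable union of submanifolds (not merely measure zero) so that the stated conclusion about submanifolds of lower dimension is literally correct; this is exactly the form in which the compact principal orbit type theorem is usually stated, so it transfers directly.
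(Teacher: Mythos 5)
Your plan is correct and follows the paper's own proof essentially step for step: apply Bredon's principal orbit type theorem to the $S^{1}K$--action on the connected manifold $\mathrm{Crit}^{o}(X)\cap X_{reg}$ (supplied by Proposition~\ref{smooth} and Theorem~\ref{main-point}), transfer conjugacy and containment of stabilizers to $\left(\mathbb{C}^{\times}G\right)_{x}$ by taking Zariski closures of $\left(S^{1}K\right)_{x}$, and then saturate by $G$ using the openness of $\Psi$. The one step you defer as ``bookkeeping'' is carried out in the paper without any properness argument, via the map $\eta(Y,z)=e^{Y}z$ on $\mathfrak{p}\times Z_{i}$, whose differential gives $\dim GZ_{i}\leq d+\dim Z_{i}<2\dim X$, together with the observation that the complement of $X_{d(X)}^{c}\cap X_{reg}$ in $X$ lies in finitely many proper Zariski-closed subsets and hence has real codimension at least $2$.
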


\begin{proof}
Let $C r i t^{o} (X)$ be as in section 3 for $G\text{.}$ Then $S^{1} K$ acts on $C r i t^{o} (X) \cap X_{r e g}$ which we have seen is a smooth, connected, real submanifold of $V$. The principal orbit type theorem for compact Lie group actions on connected smooth manifolds (cf. \cite{Bredon}
Theorem 3.1 p. 179) implies that there exists a closed subgroup $L$ of $S^{1} K$ that is a stabilizer of a point in $C r i t^{o} (X) \cap X_{r e g}$ such that if $u \in C r i t^{o} (X) \cap X_{r e g}$ then there exists $k \in S^{1} K$ such that $k L k^{ -1} \subset \left (S^{1} K\right )_{u}$. Furthermore, the set of $u \in C r i t^{o} (X) \cap X_{r e g}$ such that $K_{u}$ is not conjugate to $L$ is a finite union of $S^{1} K$--invariant submanifolds, $Z_{i}$, of lower dimension. Let $x \in X_{d}^{c} \cap X_{r e g}$ there exists $u \in G x \cap C r i t^{o} (X) \cap X_{r e g}$. thus if $u =g_{1} x$ with $g_{1} \in G$ then $\left (\mathbb{C}^{ \times } G\right )_{u} =g_{1} \left (\mathbb{C}^{ \times } G\right )_{x} g_{1}^{ -1}$. Now $\left (S^{1} K\right )_{u}$ is $\left (\mathbb{C}^{ \times } G\right )_{u} \cap S^{1} K$ and there exists $k \in S^{1} K$ such that $k L k^{ -1} \subset \left (S^{1} K\right )_{u}$. Let $H$ be the Zariski closure of $L$ in $\mathbb{C}^{ \times } G$. Note that $L =\left (S^{1} K\right )_{w}$ with $w \in C r i t^{o} (X) \cap X_{r e g}$ so the Zariski closure of $L$ in $\mathbb{C}^{ \times } G$ is $\left (\mathbb{C}^{ \times } G\right )_{w}$. This implies that $k \left (\mathbb{C}^{ \times } G\right )_{w} k^{ -1} \subset \left (\mathbb{C}^{ \times } G\right )_{u}$ hence $g_{1}^{ -1} k \left (\mathbb{C}^{ \times } G\right )_{w} k^{ -1} g_{1} \subset \left (\mathbb{C}^{ \times } G\right )_{x}$. Take $H =\left (\mathbb{C}^{ \times } G\right )_{w}$ and $g =g_{1}^{ -1} k$. 

The set of $u \in C r i t^{o} (X) \cap X_{r e g}$ with $\left (S^{1} K\right )_{u}$ conjugate to $L$ open and dense in $C r i t^{o} (V)$. Thus since the map\begin{equation*}\Psi  :G \times C r i t^{o} (X) \cap X_{r e g} \rightarrow X_{c}^{d} \cap X_{r e g}
\end{equation*} is an open mapping in the metric topology the points in $x \in V_{d}^{c}$ such that $\left (\mathbb{C}^{ \times } G\right )_{x}$ is conjugate to $H$ is open in $V_{c}^{d}$. Note that \begin{equation*}\dim \Psi  (G \times Z_{i}\,) <2 n\text{.}
\end{equation*} To prove this we observe that the map $\eta  :\mathfrak{p} \times Z_{i} \rightarrow C r i t^{o} (X) \cap X_{r e g} ,\eta  (X ,z) =e^{x} z$ \ has image $G Z_{i}$. Thus i \begin{equation*}d \Psi _{g ,z} T_{g ,z} (G \times Z_{i}) =g d \eta _{0 ,z} (\mathfrak{p} \times Z_{i})\text{.}
\end{equation*} Observing that if $Y \in \mathfrak{p}$ and $z \in Z_{i}$ then \begin{equation*}d \eta _{0 ,z} (Y ,v) =Y z +v\text{.}
\end{equation*} and that $\dim \mathfrak{p} z =d$, we conclude that $\dim \Psi  (G \times Z_{i}\,) =d +\dim  Z_{i} <d +2 n -d$. The complement in $X_{c}^{d} \cap X_{r e g}$ of the image is a union of submanifolds of lower dimension. Since the complement of $X_{d}^{c}$ is contained the union of a finite number complex hypersurfaces the theorem follows. 
\end{proof}

Note that the
same method of proof proves

\begin{theorem}
\label{principal}Let $X$ be an irreducible Zariski closed $G$--invariant subset of $V$. Assume that $d =d (X) =d_{1} (X)\text{.}$There exists a reductive subgroup, $H$, of $G$ that is the stabilizer of a point in $X_{d}^{c} \cap X_{r e g}$ such that if $x \in X_{d}^{c} \cap X_{r e g}$ then there exists $g \in G$ such that $g H g^{ -1} \subset G_{x}$. Furthermore the set of $x \in X_{d}^{c} \cap X_{r e g}$ with stabilizer conjugate to $H$ is open in $X$ in the metric topology and its complement is a finite union of real submanifolds of real dimension strictly less than $2\dim X$. 
\end{theorem}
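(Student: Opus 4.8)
The plan is to run the proof of the previous theorem with $\mathbb{C}^{\times}G$ replaced by $G$ and $S^{1}K$ replaced by $K$; the lemmas of Section~4 are not needed here, but the geometric input of Sections~2 and~3 is used verbatim. First I would note that, under the standing hypothesis $d=d(X)=d_{1}(X)$, Proposition~\ref{smooth} together with Theorem~\ref{main-point} shows that $Crit^{o}(X)\cap X_{reg}$ is a smooth, connected real submanifold of $V$ on which the compact group $K$ acts. Apply the principal orbit type theorem for compact Lie group actions on connected manifolds (cf.\ \cite{Bredon}, Theorem~3.1, p.~179): there is a closed subgroup $L\subseteq K$, unique up to $K$-conjugacy, realized as $L=K_{w}$ for some $w\in Crit^{o}(X)\cap X_{reg}$, such that every $u\in Crit^{o}(X)\cap X_{reg}$ satisfies $kLk^{-1}\subseteq K_{u}$ for a suitable $k\in K$, and such that $\Omega:=\{u\in Crit^{o}(X)\cap X_{reg}\mid K_{u}\text{ is }K\text{-conjugate to }L\}$ is open and dense with complement a finite union of $K$-invariant submanifolds $Z_{1},\dots,Z_{r}$ of strictly smaller dimension.

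Next set $H=G_{w}$. Since $w\in Crit(V)$, the Corollary to Theorem~\ref{Kempf-Ness} gives that $G_{w}$ is invariant under $g\mapsto g^{\ast}$, hence reductive, that $L=K_{w}$ is maximal compact in $G_{w}$, and that $G_{w}$ is the Zariski closure of $L$ in $G$; moreover $w\in X_{d(X)}^{c}\cap X_{reg}$, so $H$ is of the asserted kind. Now let $x\in X_{d(X)}^{c}\cap X_{reg}$. Because $Gx$ is closed, Theorem~\ref{Kempf-Ness}(4) produces a point $u\in Gx\cap Crit(V)$; since $u\in X_{reg}$ ($X_{reg}$ being $G$-invariant) and $\dim Gu=\dim Gx=d$, the identity $Crit^{o}(X)\cap X_{reg}=Crit(X)\cap X_{d(X)}^{c}\cap X_{reg}$ from the proof of Theorem~\ref{main-point} gives $u\in Crit^{o}(X)\cap X_{reg}$. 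Writing $u=g_{1}x$, we have $G_{u}=g_{1}G_{x}g_{1}^{-1}$, and choosing $k\in K$ with $kLk^{-1}\subseteq K_{u}$ and passing to Zariski closures in $G$ (using $G_{u}=\overline{K_{u}}$, the Corollary again, now at $u\in Crit(V)$) gives $kHk^{-1}\subseteq G_{u}$, hence $(g_{1}^{-1}k)\,H\,(g_{1}^{-1}k)^{-1}\subseteq G_{x}$. This is the containment assertion, with $g=g_{1}^{-1}k$.

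For the last assertion I would reuse the open mapping $\Psi:G\times(Crit^{o}(X)\cap X_{reg})\to X_{d(X)}^{c}\cap X_{reg}$, $(g,x)\mapsto gx$, and the surjectivity $G\cdot(Crit^{o}(X)\cap X_{reg})=X_{d(X)}^{c}\cap X_{reg}$, both from the proof of Theorem~\ref{main-point}. One checks that $\{x\in X_{d(X)}^{c}\cap X_{reg}\mid G_{x}\text{ is }G\text{-conjugate to }H\}=\Psi(G\times\Omega)$: for the inclusion $\subseteq$, if $G_{x}\sim_{G}H$ take $u=g_{1}x\in Gx\cap Crit^{o}(X)\cap X_{reg}$ as above, so $G_{u}=g_{1}G_{x}g_{1}^{-1}\sim_{G}H$; then $K_{u}$ and a $G$-conjugate of $L$ are two maximal compact subgroups of $G_{u}$, hence conjugate in $G_{u}$, and since $K_{u},L\subseteq K$ and $G$ is $\ast$-stable a $G$-conjugacy between subgroups of $K$ is already realized inside $K$, whence $u\in\Omega$; the inclusion $\supseteq$ is immediate from $G_{u'}=\overline{K_{u'}}$ for $u'\in\Omega$. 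Since $\Psi$ is open and $\Omega$ is open, $\Psi(G\times\Omega)$ is open in $X_{d(X)}^{c}\cap X_{reg}$, hence open in $X$ in the metric topology, since $X_{d(X)}^{c}\cap X_{reg}$ is Zariski open in $X$ (Corollary~\ref{reg}). Inside $X_{d(X)}^{c}\cap X_{reg}$ the complement of $\Psi(G\times\Omega)$ is $\Psi\bigl(G\times\bigcup_{i}Z_{i}\bigr)$, and because each $Z_{i}$ is $K$-invariant we have $\mathrm{Lie}(K)z\subseteq T_{z}Z_{i}$, so at $(g,z)\in G\times Z_{i}$ the image of $d\Psi$ equals $g\bigl(\mathfrak{p}z+T_{z}Z_{i}\bigr)$ with $\mathfrak{p}=i\,\mathrm{Lie}(K)$; as $z$ is critical with a $d$-dimensional orbit, $\dim\mathfrak{p}z=d$, so $d\Psi$ has rank $\le d+\dim Z_{i}$ everywhere on $G\times Z_{i}$, whence $\dim\Psi(G\times Z_{i})\le d+\dim Z_{i}<d+(2\dim X-d)=2\dim X$. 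Finally $X\setminus X_{d(X)}^{c}$ and $X\setminus X_{reg}$ are proper Zariski closed subsets of the irreducible variety $X$, so each lies in a finite union of real submanifolds of real dimension $<2\dim X$; combining the three contributions gives the stated description of the complement.

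The step I expect to require the most care is the identification $\{x\in X_{d(X)}^{c}\cap X_{reg}\mid G_{x}\sim_{G}H\}=\Psi(G\times\Omega)$: the principal orbit type theorem controls the \emph{compact} stabilizers $K_{u}$ on the Kempf--Ness manifold, whereas the theorem is about the reductive stabilizers $G_{x}$, and bridging the two uses both the Zariski-closure identity $G_{u}=\overline{K_{u}}$ for critical $u$ and the fact that, in the present $\ast$-stable reductive setting, $G$-conjugate subgroups of $K$ are already $K$-conjugate. The remaining ingredients are a straightforward transcription of the proof of the previous theorem.
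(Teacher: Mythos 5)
Your proposal is correct and is essentially the paper's own argument: the paper states Theorem~\ref{principal} with the remark that ``the same method of proof proves'' it, referring to the proof of the preceding theorem with $\mathbb{C}^{\times}G$ and $S^{1}K$ replaced by $G$ and $K$, which is exactly the transcription you carry out. You also supply two details the paper leaves implicit (the identification of the principal-stabilizer locus with $\Psi(G\times\Omega)$ via the fact that $G$-conjugate compact subgroups of $K$ are already $K$-conjugate in this $\ast$-stable setting), and these are handled correctly.
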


In Luna \cite{Luna}
Theorem 8 asserts that if $X$ smooth then there is a closed subgroup, $H$, of $G$ such that the set of $x \in X$ such that $G_{x}$ is conjugate to $H_{1}$ has Zariski interior. Thus under our hyptheses Luna's $H_{1}$ is conjugate to the $H$ in the above theorem (since a non-empty metric open dense set must intersect a Zariski open non-empty set in an irreducible
variety). Thus the metric open set in the theorem contains a non-empty Zariski open set. 

\section{Examples.}
1. In this example of a pair $(G ,V)$ the subgroup $H$ described in Theorem \ref{Principal} is a group of order $8$. However, there are elements in $V$ with trivial stabilizer. 

Let \begin{equation*}G =S L (2 ,\mathbb{C}) \otimes S L (2 ,\mathbb{C}) \otimes S L (2 ,\mathbb{C}) \otimes S L (2 ,\mathbb{C})
\end{equation*} and\begin{equation*}V =\mathbb{C}^{2} \otimes \mathbb{C}^{2} \otimes \mathbb{C}^{2} \otimes \mathbb{C}^{2}\text{.}
\end{equation*} This corresponds to the split (i.e. the corresponding real form is split over $\mathbb{R}$) symmetric pair $\left (D_{4} ,A_{1} \times A_{1} \times A_{1} \times A_{1}\right )$. $\mathcal{O} (V)^{G}$ is generated by homogeneous algebraically independent invariants of degrees $2 ,4 ,4 ,6$. Thus the nullcone is of dimension 12. This pair can be realized as follows:\begin{equation*}G =\left (S O (4 ,\mathbb{C}) \times S O (4 ,\mathbb{C})\right )/\{ \pm (I ,I)\}
\end{equation*} acting on $M_{4} (\mathbb{C})$ by $(g ,h)X =g X h^{ -1}\text{.}$ Any closed orbit must intersect the diagonal. The subgroup of $G$ that acts trivially in the diagonal is the subgroup \begin{equation*}\left \{\left (\left [\begin{array}{cccc}\varepsilon _{1} & 0 & 0 & 0 \\
0 & \varepsilon _{2} & 0 & 0 \\
0 & 0 & \varepsilon _{3} & 0 \\
0 & 0 & 0 & \varepsilon _{4}\end{array}\right ] ,\left [\begin{array}{cccc}\varepsilon _{1} & 0 & 0 & 0 \\
0 & \varepsilon _{2} & 0 & 0 \\
0 & 0 & \varepsilon _{3} & 0 \\
0 & 0 & 0 & \varepsilon _{4}\end{array}\right ]\right )\vert \varepsilon _{j} = \pm 1\right \}/\{ \pm (I ,I)\}
\end{equation*} This is a group of order $8$. The invariants mentioned above can be taken to be \begin{equation*}\mathrm{t} \mathrm{r} X X^{T} ,\det X ,\mathrm{t} \mathrm{r} \left (X X^{T}\right )^{2} ,\mathrm{t} \mathrm{r} \left (X X^{T}\right )^{3}\text{.}
\end{equation*} $G$ is a subgroup of the\ version of $D_{4}$ of adjoint type and $V$ is the $G$--invariant complement of $L i e (G)$ in $D_{4}$. Since the pair is split, $V$ contains a principal nilpotent element of $D_{4}$. Since we are in the case of adjoint type the stabilizer in $D_{4}$ of such an element, $x$, consists of unipotent elements. Thus $G_{x}$ consists of unipotent elements and is of dimension $0$ (since the orbit of $x$ is open in the null cone). Thus it is trivial. An example of such an element is ($e_{1} ,e_{2}$ the standard basis of $\mathbb{C}^{2}$)

\begin{equation*}e_{1} \otimes e_{1} \otimes e_{2} \otimes e_{2} +e_{1} \otimes e_{2} \otimes e_{1} \otimes e_{1} +e_{2} \otimes e_{1} \otimes e_{1} \otimes e_{2} +e_{2} \otimes e_{1} \otimes e_{2} \otimes e_{1}\text{.}
\end{equation*} This implies that the principal orbit type for $K =S U (2) \otimes S U (2) \otimes S U (2) \otimes S U (2)$ on $V$ has trivial stabilizer. 

Note that the group $H$ in \ref{principal} is the group of order $8$. Thus in general his principal orbit type is not a minmal orbit. 

2. This is an example with a critical
element having a trivial stabilizer in $G$ but not in $\mathbb{C}^{ \times } G$. Consider \begin{equation*}\mathbb{C}^{ \times } G =G L (2) \otimes G L (2) \otimes G L (2) \otimes G L (2) \otimes G L (2)
\end{equation*} and \begin{equation*}G =S L (2) \otimes S L (2) \otimes S L (2) \otimes S L (2) \otimes S L (2)
\end{equation*}\begin{equation*}V =\mathbb{C}^{2} \otimes \mathbb{C}^{2} \otimes \mathbb{C}^{2} \otimes \mathbb{C}^{2} \otimes \mathbb{C}^{2}\text{.}
\end{equation*} 

Let $w$ be the sum of the elements $e_{i_{1}} \otimes e_{i_{2}} \otimes e_{i_{3}} \otimes e_{i_{3}} \otimes e_{i_{3}}$ with all of the $i_{j}$ except for one, say $i_{k}$, equal to $2$ \ and $i_{k} =1$. That is \begin{equation*}w =e_{1} \otimes e_{2} \otimes e_{2} \otimes e_{2} \otimes e_{2} +e_{2} \otimes e_{1} \otimes e_{2} \otimes e_{2} \otimes e_{2} +\ldots 
\end{equation*} Then the element\begin{equation*}v =e_{1} \otimes e_{1} \otimes e_{1} \otimes e_{1} \otimes e_{1} -\frac{1}{\sqrt{3}} w
\end{equation*} is critical and $G_{v} =\{I\}$. However, the element $g \in G$ given as follows: Let $\xi ^{2} =i$ (i.e. $\xi $ is a primitive eighth root of $1$) and set \begin{equation*}A =\left [\begin{array}{cc}\xi  & \, \\
\, & \xi ^{ -1}\end{array}\right ]
\end{equation*} and \begin{equation*}g =A \otimes A \otimes A \otimes A \otimes A
\end{equation*} We have \begin{equation*}g v =\xi ^{ -3} v\text{.}
\end{equation*} In \cite{GoKrWa} we have
shown that the set of $x \in V$ such that $G x$ is closed and $\left (\mathbb{C}^{ \times } G\right )_{x}$ is trivial is open and dense in $V$. This type of element, $v$, was studied in \cite{GIT}, Theorem 5.1.4. It
also plays a role in \cite{GoKrWa} the element $g$ as above was pointed out to us by David Saurwein. 

\bigskip

\end{document}